\theoremstyle{plain}
\newtheorem{theorem}{Theorem}
\numberwithin{equation}{section}
\newcommand{\ra}{\rightarrow}
\newcommand{\R}{\mathbb{R}}
\newcommand{\rd}{{\rm d}}
\begin{document}

\title {Automorphisms of Quadratic Liouville Structures}

\date{}

\author[P.L. Robinson]{P.L. Robinson}

\address{Department of Mathematics \\ University of Florida \\ Gainesville FL 32611  USA }

\email[]{paulr@ufl.edu}

\subjclass{} \keywords{}

\begin{abstract}

We examine the diffeomorphisms of a symplectic vector space that preserve a chosen symplectic potential. Our examination yields an explicit description of these diffeomorphisms when the chosen potential differs from the canonical potential by the differential of a homogeneous quadratic in one of three broad classes. 

\end{abstract}

\maketitle

\bigbreak

\section{Introduction}

A {\it Liouville form} on the symplectic manifold $(M, \omega)$ is a one-form $\theta \in \Omega^1 (M)$ satisfying the equation $\rd \theta = \omega$. Such a one-form is also called a {\it symplectic potential}; its existence forces $M$ to be noncompact. The unique vector field $\zeta \in {\rm Vec} (M)$ with contraction $\zeta \lrcorner \; \omega = \theta$ is called the corresponding {\it Liouville field}; its time $t$ flow $\phi_t$ satisfies $\phi_t^* \omega = e^t\omega$. A diffeomorphism $g : M \ra M$ is called an automorphism of the Liouville structure $(M, \theta)$ precisely when $g^* \theta = \theta$ and a symplectomorphism of $(M, \omega)$ precisely when $g^* \omega = \omega$. We denote by ${\rm Aut} (M, \theta)$ the group comprising all automorphisms of $(M, \theta)$; it is a subgroup of the group ${\rm Sp} (M, \omega)$ comprising all symplectomorphisms of $(M, \omega)$. Each automorphism of $(M, \theta)$ preserves not only $\omega$ but also $\zeta$ and its flow $\phi$. 

\medbreak

We shall focus on the simplest of all symplectic manifolds. Let $(V, \Omega)$ be a symplectic vector space: thus, $V$ is a real vector space on which $\Omega$ is a nonsingular alternating bilinear form. When $V$ is given its ntural smooth manifold structure, the tangent space (of derivations) at each $z \in V$ is naturally isomorphic to the vector space $V$ itself: the canonical isomorphism $V \ra T_z V : v \mapsto v_z$ is defined by 
	\[v_z \psi = \psi'_z (v) = \frac{\rd}{\rd t} \psi(z + t v)\big|_0
\]
for each smooth function $\psi: V \ra \R$. These canonical isomorphisms transport $\Omega$ to a (translation-invariant) symplectic form $\omega$ on $V$ defined by 
	\[\omega_z (x_z, y_z) = \Omega (x, y) 
\]
whenever $x, y, z \in V$. The linear symplectic group ${\rm Sp} (V, \Omega)$ comprises all linear automorphisms $g$ of $V$ such that $\Omega (g x, g y) = \Omega (x, y)$ for all $x, y \in V$; naturally, this is a subgroup of the symplectomorphism group ${\rm Sp} (V, \omega)$. 

\medbreak 

This simplest symplectic manifold carries a unique ${\rm Sp}(V, \Omega)$-invariant Liouville form $\theta^0$: explicitly, 
	\[\theta^0_z (v_z) : = \frac{1}{2} \Omega (z, v)
\]
whenever $z, v \in V$. The verification that $\theta^0$ is a Liouville form for $(V, \omega)$ is entirely routine. Likewise routine is verification that $\theta^0$ is ${\rm Sp}(V, \Omega)$-invariant: if $g \in {\rm Sp} (V, \Omega)$ then $g^* \theta = \theta$. The canonical nature of $\theta^0$ makes it the only ${\rm Sp} (V, \Omega)$-invariant Liouville form on $(V, \omega)$. To see this, let $\theta$ be any ${\rm Sp} (V, \Omega)$-invariant Liouville form: as $\theta - \theta^0$ is closed and hence exact, $\theta = \theta^0 + \rd \psi$ for some smooth function $\psi: V \ra \R$. If $g \in {\rm Sp} (V, \Omega)$ then ${\rm d} (\psi \circ g) = {\rm d} (g^* \psi) = g^* ({\rm d} \psi) = {\rm d} \psi$ so that $\psi \circ g - \psi$ is constant; as $g$ fixes the origin, it follows that $\psi \circ g = \psi$. The action of ${\rm Sp} (V, \Omega)$ on $V \setminus \{0\}$ being transitive, we deduce that $\psi$ is constant and conclude that $\theta = \theta^0$.  

\medbreak 

Henceforth, we shall often suppress the superscript $0$ and write simply $\theta$ for the canonical Liouville form. Its corresponding Liouville field $\zeta$ is one-half the Euler field: its value at $z \in V$ is precisely $[\frac{1}{2} z]_z$; its Liouville flow $\phi_t$ at time $t$ sends $z$ to $e^{\frac{1}{2} t} z$. This canonical Liouville structure assumes a familiar form in terms of linear symplectic coordinates $(p_1, \dots , p_m, q_1, \dots , q_m)$: 
\[\theta = \frac{1}{2} \sum_{i = 1}^{m} (p_i {\rm d} q_i - q_i {\rm d} p_i),
\]
	\[\zeta = \frac{1}{2} \sum_{i = 1}^{m} (p_i \frac{\partial}{\partial p_i} + q_i \frac{\partial}{\partial q_i}). 
\]

\medbreak 

The automorphism group of this canonical Liouville structure is readily found. 

\begin{theorem} \label{can}

${\rm Aut} (V, \theta^0) = {\rm Sp} (V, \Omega)$

\end{theorem}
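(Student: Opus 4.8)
The plan is to establish the two inclusions separately. The inclusion $\mathrm{Sp}(V,\Omega) \subseteq \mathrm{Aut}(V,\theta^0)$ is precisely the $\mathrm{Sp}(V,\Omega)$-invariance of $\theta^0$ already recorded above, so the content of the theorem is the reverse inclusion: an arbitrary diffeomorphism $g$ with $g^* \theta^0 = \theta^0$ must be a linear symplectomorphism.

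First I would unpack the hypothesis into two pointwise statements, using the canonical identification $T_z V \cong V$ under which the derivative of $g$ at $z$ becomes an endomorphism $g'_z$ of $V$. Applying $\rd$ to $g^* \theta^0 = \theta^0$ gives $g^* \omega = \omega$, which says $g'_z \in \mathrm{Sp}(V,\Omega)$ for every $z$; and the equation $g^* \theta^0 = \theta^0$ itself reads $\Omega(g(z), g'_z v) = \Omega(z, v)$ for all $z, v \in V$.

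Next comes the key manipulation: since $g'_z$ is symplectic and invertible, substituting $(g'_z)^{-1} w$ for $v$ and using $\Omega(g'_z z, w) = \Omega(z, (g'_z)^{-1} w)$ turns the second equation into $\Omega(g(z), w) = \Omega(g'_z z, w)$ for all $w$; nondegeneracy of $\Omega$ then yields the Euler-type identity $g(z) = g'_z(z)$, valid at every $z \in V$. To conclude, I would integrate this along rays: for fixed $z$ the curve $h(t) := g(t z)$ satisfies $t\, h'(t) = g'_{t z}(t z) = g(t z) = h(t)$, so $h(t)/t$ is constant on $\{t > 0\}$ and on $\{t < 0\}$; smoothness of $g$ at $0$ forces $g(0) = 0$ and makes both constants equal to $g'_0(z)$, whence $g(z) = h(1) = g'_0(z)$. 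Thus $g = g'_0$ is linear, and being already known to preserve $\omega$ it lies in $\mathrm{Sp}(V,\Omega)$.

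The step I expect to be the main obstacle is this last one — passing from the homogeneity identity $g(z) = g'_z(z)$ to genuine linearity. The identity by itself only delivers positive homogeneity of degree one, i.e.\ constancy of $h(t)/t$ on each half-line separately; it is precisely the global smoothness of the diffeomorphism $g$ at the origin that bridges the two half-lines and identifies the homogeneity constant with $g'_0(z)$. (Alternatively one could argue via the Liouville field: $g$ preserves $\zeta$, hence commutes with its flow $z \mapsto e^{\frac{1}{2} t} z$, so $g(\lambda z) = \lambda g(z)$ for $\lambda > 0$, and again smoothness at $0$ upgrades this to linearity — the same obstruction in a different guise.) Everything else is a routine unwinding of definitions.
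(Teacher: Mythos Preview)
Your argument is correct and is essentially the paper's own proof: the Euler identity $g(z)=g'_z(z)$ that you extract is exactly the statement that $g$ preserves the Liouville field $\zeta_z=[\tfrac12 z]_z$, and your ray integration $h(t)=g(tz)$ recovers the flow relation $g(e^{\frac12 t}z)=e^{\frac12 t}g(z)$ that the paper uses directly before letting $t\to-\infty$ to obtain $g=g'_0$. Your parenthetical ``alternative'' via the Liouville flow is in fact the paper's chosen phrasing, so the two presentations differ only in whether the field-preservation step is quoted or derived by hand.
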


\begin{proof} 
The inclusion ${\rm Sp} (V, \Omega) \subseteq {\rm Aut} (V, \theta^0)$ is clear. For the reverse inclusion, let $g \in {\rm Aut} (V, \theta^0)$. Notice that $g$ is necessarily a symplectomorphism of $(V, \omega)$: each derivative $g'_z : V \ra V$ lies in ${\rm Sp} (V, \Omega)$. Notice also that $g$ preserves the Liouville field $\zeta$ and the Liouville flow $\phi$: thus 
\[g(e^{\frac{1}{2} t}z) = g(\phi_t z) = \phi_t g(z) = e^{\frac{1}{2} t}g(z).  
\]
Passage to the limit as $t \ra -\infty$ reveals that $g$ fixes the origin: $g(0) = 0$. Rearrangement then gives 
	\[g(z) = \frac{g(e^{\frac{1}{2} t}z) - g(0)}{e^{\frac{1}{2} t}}
\]
and further passage to the limit as $t \ra -\infty$ yields 
	\[g = g'_0 \in {\rm Sp} (V, \Omega). 
\]
\end{proof} 

\medbreak 

We remark that in the proof, $g$ need not be a diffeomorphism: any smooth map $g : V \ra V$ satisfying $g^* \theta = \theta$ must lie in ${\rm Sp} (V, \Omega)$. 

\medbreak 

Our aim in this paper is to examine the automorphism group of certain Liouville structures on $(V, \omega)$. As noted above, each Liouville form on $(V, \omega)$ equals $\theta + \rd \psi$ for some smooth function $\psi : V \ra \R$; we shall focus on those Liouville forms for which $\psi$ is a homogeneous quadratic. As we recall in Section 1, homogeneous quadratics $\psi : V \ra \R$ correspond to elements $A \in {\rm sp} (V, \Omega)$ of the symplectic Lie algebra, according to the conveniently normalized rule 
	\[\psi (z) = \frac{1}{4} \Omega (z, A z).
\]
We shall precisely determine ${\rm Aut} (V, \theta + \rd \psi)$ for the three classes of homogeneous quadratics $\psi$ corresponding to the conditions $A^2 = 0$, $A^2 = + I$ and $A^2 = -I$. Those $A$ such that $A^2 = -I$ are the compatible pseudounitary structures on $(V, \Omega)$; in such cases, ${\rm Aut} (V, \theta + \rd \psi)$ is precisely the corresponding pseudounitary group. Those $A$ such that $A^2 = +I$ correspond to Lagrangian splittings of $(V, \Omega)$; in such cases, ${\rm Aut} (V, \theta + \rd \psi)$ is effectively the full diffeomorphism group of the Lagrangian subspace ${\rm Ker} (A - I)$. Those $A$ such that $A^2 = 0$ behave more like the ones in the $A^2 = -I$ class, in that ${\rm Aut} (V, \theta + \rd \psi)$ is the centralizer of $A$ in the linear symplectic group ${\rm Sp} (V, \Omega)$; this is of course a generalization of Theorem \ref{can}.   

\medbreak

\section{General Considerations} 

Let $A: V \ra V$ be a linear map; its symplectic adjoint $A^{\dagger} : V \ra V$ is defined by the rule 
	\[\Omega (A^{\dagger} x, y) = \Omega (x, A y)
\]
for all $x, y \in V$. The symplectic Lie algebra of $(V, \Omega)$ is 
	\[{\rm sp} (V, \Omega) = \{ A: V \ra V : A^{\dagger} = - A \}. 
\]
Thus, ${\rm sp} (V, \Omega)$ comprises precisely all linear maps $A: V \ra V$ that preserve $\Omega$ in the infinitesimal sense 
	\[\Omega (Ax, y) + \Omega (x, Ay) = 0
\]
whenever $x, y \in V$; equivalently, ${\rm sp} (V, \Omega)$ comprises precisely all linear maps $A: V \ra V$ for which a symmetric bilinear form is defined by 
	\[V \times V \ra \R : (x, y) \mapsto \Omega (x,A y).
\] 
Note that each $A \in {\rm sp} (V, \Omega)$ exponentiates to a one-parameter subgroup $\{ e^{A t} : t \in \R \}$ of ${\rm Sp} (V, \Omega)$.  

\medbreak

Denote by $\mathcal{Q} (V)$ the space comprising all homogeneous quadratic real-valued functions on $V$. A linear isomorphism 
	\[{\rm sp} (V, \Omega) \ra \mathcal{Q} (V) : A \mapsto \psi^A
\]
is accordingly given by the rule 
	\[\psi^A (z) = \frac{1}{4} \Omega (z, A z)
\]
for all $z \in V$; here, the $\frac{1}{4}$ normalization is introduced for convenience in what follows. The differential $\rd \psi^A$ of $\psi^A$ is given by 
	\[\rd \psi^A_z (v_z) = \frac{1}{2} \Omega (z, A v) = - \frac{1}{2} \Omega (A z, v)
\]
for all $z, v \in V$. 

\medbreak 

Fix $A \in {\rm sp} (V, \Omega)$ and the associated quadratic $\psi^A \in \mathcal{Q} (V)$ as above. The corresponding Liouville form 
	\[\theta^A = \theta + \rd \psi^A
\]
is given by the formula
	\[\theta^A_z (v_z) = \frac{1}{2} \Omega (z - A z, v)
\]
and the corresponding Liouville field $\zeta^A$ by 
	\[\zeta^A_z = [\frac{1}{2} (z - A z)]_z
\]
for $z, v \in V$. The Liouville flow $\phi^A$ solves the differential equation 
	\[\frac{\rd z}{\rd t} = \frac{1}{2}(z - A z)
\]
so the time $t$ flow $\phi^A_t$ maps $z \in V$ to 
	\[\phi^A_t (z) = e^{\frac{1}{2} t} e^{- \frac{1}{2} A t} z = e^{\frac{1}{2} (I - A) t}. 
\]

Now, let $g : V \ra V$ be a diffeomorphism; for future use, we shall here assemble formulae for the effect of $g$ on the Liouville structure determined by $A$ as above. The pullback $g^* \theta^A$ of the Liouville form is  
	\[(g^* \theta^A)_z (v_z) = \theta^A_{gz} (g_* (v_z)) = \theta^A_{gz} ((g'_z v)_{gz})
\]	
so that explicitly
	\[(g^* \theta^A)_z (v_z) = \frac{1}{2} \Omega ( g(z) - A g(z), g'_z (v)) 
\]
and the condition $g^* \theta^A = \theta^A$ that $g$ preserve the Liouville form $\theta^A$ reads
	\[\Omega (g(z) - A g(z), g'_z (v)) = \Omega (z - A z, v).
\]
The pushforward $g_* \zeta^A$ of the Liouville field $\zeta^A$ is 
	\[g_* (\zeta^A_z) = g_*([\frac{1}{2} (z - A z)]_z)
\]
so that explicitly 
	\[g_* (\zeta^A_z) = [\frac{1}{2} (g'_z (z - A z))]_{g(z)} 
\]
and the condition $g_* \zeta^A = \zeta^A$ that $g$ preserve the Liouville field $\zeta^A$ reads
	\[g'_z (z - A z) = g(z) - A g(z). 
\]
Finally, the condition that $g$ preserve the Liouville flow $\phi^A$ reads 
	\[g(e^{\frac{1}{2} t} e^{- \frac{1}{2} A t} z) = e^{\frac{1}{2} t} e^{- \frac{1}{2} A t} g(z). 
\]

Let us write 
	\[{\rm Sp} (V, \Omega)^A = \{ g \in {\rm Sp} (V, \Omega) : g A = A g \} 
\]
for the centralizer of $A \in {\rm sp} (V, \Omega)$ in the linear symplectic group. 

\begin{theorem} \label{Z}
If $A \in {\rm sp} (V, \Omega)$ is arbitrary then 
	\[{\rm Sp} (V, \Omega)^A \subseteq {\rm Aut} (V, \theta^A). 
\]
\end{theorem}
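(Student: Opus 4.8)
The plan is to verify directly that every $g \in {\rm Sp}(V, \Omega)^A$ satisfies $g^* \theta^A = \theta^A$; since such a $g$ is a linear isomorphism of $V$ it is in particular a diffeomorphism, so this places it in ${\rm Aut}(V, \theta^A)$.

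First I would decompose $g^* \theta^A = g^* \theta + g^*(\rd \psi^A)$, using $\theta^A = \theta + \rd \psi^A$ and linearity of pullback. The first summand is immediate: by Theorem \ref{can} we have $g \in {\rm Sp}(V, \Omega) = {\rm Aut}(V, \theta^0)$, hence $g^* \theta = \theta$. For the second summand, naturality of the exterior derivative gives $g^*(\rd \psi^A) = \rd(g^* \psi^A) = \rd(\psi^A \circ g)$, so it suffices to show that $\psi^A$ is $g$-invariant.

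That invariance is a one-line computation, and it is precisely where both hypotheses $g \in {\rm Sp}(V, \Omega)$ and $gA = Ag$ enter: for $z \in V$,
	\[\psi^A (g z) = \frac{1}{4} \Omega(g z, A g z) = \frac{1}{4} \Omega(g z, g A z) = \frac{1}{4} \Omega(z, A z) = \psi^A(z),\]
the middle equality using $A g = g A$ and the next using that $g$ is symplectic. Thus $\psi^A \circ g = \psi^A$, whence $g^*(\rd \psi^A) = \rd \psi^A$, and combining the two summands yields $g^* \theta^A = \theta + \rd \psi^A = \theta^A$, as required.

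I do not anticipate any genuine obstacle here: the statement is essentially a bookkeeping consequence of Theorem \ref{can} together with the evident fact that the quadratic $\psi^A$ is constant along the orbits of its own symplectic centralizer. As an alternative one could bypass Theorem \ref{can} and check the displayed condition $\Omega(g(z) - A g(z), g'_z(v)) = \Omega(z - A z, v)$ head-on: linearity of $g$ gives $g'_z = g$, commutativity gives $g(z) - A g(z) = g(z - A z)$, and then $\Omega(g(z - A z), g(v)) = \Omega(z - A z, v)$ because $g$ is symplectic.
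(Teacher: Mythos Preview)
Your proof is correct. Your \emph{alternative} approach at the end is exactly the paper's proof: the paper verifies the condition $\Omega(g(z) - A g(z), g'_z(v)) = \Omega(z - A z, v)$ directly, using $g'_z = g$, $g(z) - A g(z) = g(z - Az)$, and $g \in {\rm Sp}(V,\Omega)$.

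Your primary approach---splitting $\theta^A = \theta + \rd\psi^A$ and handling the two summands separately via Theorem~\ref{can} and invariance of $\psi^A$---is a mild but genuine variation. It has the conceptual advantage of isolating exactly where each hypothesis is used (symplecticity for $g^*\theta = \theta$, commutation with $A$ for $\psi^A \circ g = \psi^A$), and it makes clear that the result is really just ``${\rm Sp}(V,\Omega)$ preserves $\theta$ and the centralizer preserves the quadratic correction.'' Note that you only invoke the easy inclusion ${\rm Sp}(V,\Omega) \subseteq {\rm Aut}(V,\theta^0)$ from Theorem~\ref{can}, so you are not relying on any nontrivial input. The paper's direct computation is slightly more self-contained (it never splits $\theta^A$) but otherwise of the same weight; there is nothing to choose between them.
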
 

\begin{proof} 
If $g \in {\rm Sp} (V, \Omega)^A$ then each derivative $g'_z = g \in {\rm Sp} (V, \Omega)$ commutes with $A$ so that
	\[\Omega ( g(z) - A g(z), g'_z (v)) = \Omega ( g(z - Az), g(v)) = \Omega ( z - Az, v)
\]
and therefore 
	\[(g^* \theta^A)_z (v_z) = \theta^A_z (v_z) 
\]
which places $g$ in ${\rm Aut} (V, \theta^A)$. 
\end{proof} 

\medbreak 

In fact, ${\rm Sp} (V, \Omega)^A$ comprises precisely all those elements of ${\rm Aut} (V, \theta^A)$ whose action on $V$ is linear. To see this, let $g \in {\rm Aut} (V, \theta^A) \subseteq {\rm Sp} (V, \omega)$ be a linear map. The linearity of $g$ has two consequences: immediately, $g = g'_0 \in {\rm Sp} (V, \Omega)$; and the condition that $g$ preserve $\zeta^A$ yields 
\[g(z) - g(Az) = g(z - A z) = g'_z (z - A z) = g(z) - A g(z)
\]
from which $g A = A g$ follows by cancellation. As we have seen, it is possible for every automorphism of a Liouville structure on $(V, \omega)$ to be linear: this is true of the canonical Liouville structure, according to Theorem \ref{can}. As we shall see, $(V, \theta^A)$ can have nonlinear automorphisms: the inclusion of ${\rm Sp} (V, \Omega)^A$ in ${\rm Aut} (V, \theta^A)$ can be proper. 
\medbreak 

\section{The case $A^2 = -I$}

Let $A \in {\rm sp} (V, \Omega)$ satisfy $A^2 = -I$. At once, $A$ is a complex structure on the real vector space $V$; this enables us to regard $V$ as a complex vector space on which $i$ acts through $A$. Further, a complex inner product $\langle \cdot | \cdot \rangle^A$ is defined on $V$ (made complex via $A$) by the rule 
	\[\langle x | y \rangle^A = \Omega (x, A y) + i \Omega (x, y)
\]
for all $x, y \in V$; this inner product is always nonsingular but need not be positive definite. Such an $A$ is thus a compatible pseudounitary structure on $(V, \Omega)$ and is a compatible unitary structure in the positive definite case. The pseudounitary group ${\rm U} (V,\langle \cdot | \cdot \rangle^A)$ of $V$ for the inner product $\langle \cdot | \cdot \rangle^A$ is exactly the centralizer of $A$ in ${\rm Sp} (V, \Omega)$: 
	\[{\rm U} (V,\langle \cdot | \cdot \rangle^A) = {\rm Sp} (V, \Omega)^A. 
\]
 
\medbreak 

For convenience, in this section we shall often suppress $A$ from the notation, writing $i$ for its action on $V$ as a complex vector space and $\langle \cdot | \cdot \rangle$ for the inner product introduced above.  The homogeneous quadratic $\psi^A \in \mathcal{Q} (V)$ then assumes the form 
	\[\psi^A (z) = \frac{1}{4} \Omega (z, A z) = \frac{1}{4} \langle z | z \rangle
\]
and the time $2 t$ Liouville flow becomes
	\[\phi^A_{2 t} (z) = e^t e^{- i t} z. 
\]

\medbreak 

Now, let $g \in {\rm Aut} (V, \theta^A)$ be an automorphism of the Liouville structure $(V, \theta^A)$. It follows that $g \in {\rm Sp} (V, \omega)$ is a symplectomorphism, whence each derivative $g'_z$ lies in ${\rm Sp} (V, \Omega)$. As $g$ preserves the time $2 t$ Liouville flow,  
	\[g(e^{t} e^{- i t} z) = e^{t} e^{- i t} g(z)
\]
which upon replacement of $z$ by $e^{i t} z$ becomes 
	\[e^{- t} g(e^t z) = e^{ - i t} g(e^{ i t} z). 
\]
Let $t \ra - \infty$ in the first of these equations to see that $g(0) = 0$; use this to rewrite the second equation as 
	\[e^{ - t} \bigl(g(e^t z) - g(0) \bigr) = e^{ - i t} g(e^{ i t} z).
\]
Here, let $t \ra - \infty$ once more: the left-hand side converges to $g'_0 (z)$ but the right-hand side is periodic; we deduce that both sides are actually constant, with value $g'_0 (z)$. Return to the second equation and incorporate this new information: 
	\[e^{- t} g(e^t z) = g'_0 (z) = e^{ - i t} g(e^{ i t} z).
\]
Setting $t = 0$ on the left shows that 
	\[g = g'_0 \in {\rm Sp} (V, \Omega)
\]
while setting $t = \pi / 2$ on the right then shows that $g$ commutes with $i = A$. 

\medbreak 

We conclude that ${\rm Aut} (V, \theta^A)$ is contained in ${\rm Sp} (V, \Omega)^A$; as Theorem \ref{Z} gives the reverse inclusion, we have established the following result. 

\begin{theorem} \label{-I}
If $A \in {\rm sp} (V, \Omega)$ satisfies $A^2 = -I$ then 
	\[{\rm Aut} (V, \theta^A) = {\rm Sp} (V, \Omega)^A. 
\]
\end{theorem}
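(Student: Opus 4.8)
The plan is to prove the nontrivial inclusion ${\rm Aut}(V, \theta^A) \subseteq {\rm Sp}(V, \Omega)^A$, since Theorem \ref{Z} already furnishes the reverse containment ${\rm Sp}(V, \Omega)^A \subseteq {\rm Aut}(V, \theta^A)$. I would fix $g \in {\rm Aut}(V, \theta^A)$ and extract its structure in stages. The formal consequences come first: because $\omega = \rd\theta^A$ is preserved by $g$, the map $g$ is a symplectomorphism of $(V, \omega)$ and each derivative $g'_z$ lies in ${\rm Sp}(V, \Omega)$; and because $g$ preserves $\theta^A$ it preserves the associated Liouville field $\zeta^A$, hence its flow. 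Writing $i$ for the action of $A$ (so that $e^{it} = \cos t \cdot I + \sin t \cdot A$), the time-$2t$ flow formula $\phi^A_{2t}(z) = e^t e^{-it} z$ converts invariance of the flow into the functional equation $g(e^t e^{-it} z) = e^t e^{-it} g(z)$ for all $t \in \R$ and $z \in V$.

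The core of the argument is to decouple the contracting factor $e^t$ from the rotation factor $e^{-it}$. Replacing $z$ by $e^{it} z$ (and using $e^{-it} e^{it} = I$, both being polynomials in $A$) turns the equation into $e^{-t} g(e^t z) = e^{-it} g(e^{it} z)$. Letting $t \to -\infty$ in the original equation shows $g(0) = 0$, since $e^t e^{-it} z \to 0$ while the family $e^{-it}$ is norm-bounded; consequently the left-hand side above equals $e^{-t}\bigl(g(e^t z) - g(0)\bigr)$, which converges to $g'_0(z)$ as $t \to -\infty$, exactly as in the proof of Theorem \ref{can}. But the right-hand side $e^{-it} g(e^{it} z)$ is $2\pi$-periodic in $t$, and a periodic function that possesses a limit must be constant; hence both sides are identically equal to $g'_0(z)$.

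It then remains only to read off the two conclusions. From $e^{-t} g(e^t z) = g'_0(z)$ for all $t$, setting $t = 0$ gives $g = g'_0$, so $g$ is linear and $g \in {\rm Sp}(V, \Omega)$. From $e^{-it} g(e^{it} z) = g'_0(z) = g(z)$, that is $g(e^{it} z) = e^{it} g(z)$, setting $t = \pi/2$ gives $g(iz) = i g(z)$, i.e. $g A = A g$. Hence $g \in {\rm Sp}(V, \Omega)^A$, which completes the inclusion and the theorem.

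The argument is essentially a calculation once the substitution $z \mapsto e^{it} z$ is chosen, so I expect no serious obstacle; the one point needing a moment's care is the appeal to the fact that a periodic function admitting a limit is constant, together with keeping straight which limit ($t \to -\infty$, as in Theorem \ref{can}) forces linearity and which value ($t = \pi/2$) forces commutation with $A$. Conceptually, the Liouville flow of $\theta^A$ is a dilation twisted by a one-parameter rotation group, and these two features are responsible, respectively, for the linearity of $g$ and for its $A$-equivariance.
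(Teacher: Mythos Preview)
Your proposal is correct and follows essentially the same line as the paper's own argument: the substitution $z \mapsto e^{it}z$ to separate the equation into $e^{-t}g(e^t z) = e^{-it}g(e^{it}z)$, the limit $t \to -\infty$ combined with periodicity to force both sides equal to $g'_0(z)$, and the evaluations at $t=0$ and $t=\pi/2$ to extract linearity and $A$-equivariance, respectively.
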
 
\begin{flushright}
$\Box$
\end{flushright} 
\medbreak 
 
Otherwise said, ${\rm Aut} (V, \theta^A)$ coincides here with the pseudounitary group ${\rm U} (V,\langle \cdot | \cdot \rangle^A)$.	

\medbreak 

\section{The case $A^2 = + I$}

Let $A \in {\rm sp} (V, \Omega)$ satisfy $A^2 = +I$. In this case, $V$ has an $A$-eigendecomposition
	\[V = V^A_+ \oplus V^A_-
\]
where each of the spaces 
	\[V^A_+ = {\rm Ker} (A - I)
\]
and 
	\[V^A_- = {\rm Ker} (A + I)
\]
is a Lagrangian in $(V, \Omega)$ : that is, a maximal subspace of $V$ on which $\Omega$ is identically zero. In the opposite direction, if $V = V_+ \oplus V_-$ is a Lagrangian splitting of $(V, \Omega)$ then the linear map $A : V \ra V$ defined by $A| V_{\pm} = \pm I$ lies in ${\rm sp} (V, \Omega)$ and has square $+I$. 

\medbreak 

Again for convenience, in this section we shall often take the liberty of suppressing $A$ from the notation in this way; thus, the $A$-eigendecomposition $V = V_+ \oplus V_-$ engenders projections $P_{\pm} : V \ra V_{\pm}: v \mapsto v_{\pm} $. Note from $A^2 = I$ that 
	\[e^{ A t} = (\cosh t) I + (\sinh t) A
\]
so that 
	\[e^{A t} |V_{\pm} = e^{\pm t} I. 
\]
The homogeneous quadratic $\psi^A \in \mathcal{Q} (V)$ takes the form  
	\[\psi^A (z) = \frac{1}{4} \Omega (z, A z) = \frac{1}{2} \Omega (z_- , z_+ )
\]
while the time $t$ Liouville flow 
	\[\phi^A_{t} = e^{ \frac{1}{2} t} e^{ - \frac{1}{2} A t}
\]
has on $V_+$ the effect 
	\[\phi^A_{t} (z_+ ) = z_+ 
\]
and on $V_-$ the effect 
	\[\phi^A_{t} (z_- ) = e^t z_-. 
\]

\medbreak 

Now, let $g \in {\rm Aut} (V, \theta^A)$ be an automorphism of $(V, \theta^A)$; once again, each $g'_z$ lies in ${\rm Sp} (V, \Omega)$. As $g$ preserves the time $t$ Liouville flow, 
	\[g(z)_+ + e^t g(z)_- =\phi^A_t g(z) =  g( \phi^A_t z) = g(z_+ + e^t z_-)
\]
whence the limit as $t \ra - \infty$ yields 
	\[g(z)_+ = g(z_+). 
\]
This proves that $g$ maps $V_+$ to itself, thereby defining a function 
	\[f := g|V_+ : V_+ \ra V_+. 
\]
It proves more, namely that $g$ commutes with the projection $P_+$ of $V$ on $V_+$ along $V_-$:  
	\[P_+ \circ g = g \circ P_+ = f \circ P_+ 
\]
in consequence of which  
	\[P_+ \circ g'_z  = (P_+ \circ g)'_z = (f \circ P_+)'_z = f'_{z_+} \circ P_+.  
\]
As $g$ preserves the Liouville form, 
	\[2\Omega (z_-, v) = \Omega ((I - A) z, v) = \Omega ((I - A) g(z), g'_z v) = 2 \Omega (g(z)_-, g'_z v)
\]	
whence 
	\[\Omega (z_-, v) = \Omega (g(z)_-, P_+ (g'_z v))
\]
because $V_-$ is isotropic. From these facts, it follows that 
	\[\Omega (z_-, v) = \Omega (g(z)_-, \{ f'_{z_+} \circ P_+ \} v) = \Omega (\{ f'_{z_+} \circ P_+ \}^{\dagger}(g(z)_-), v)
\]
thus by nonsingularity of $\Omega$ 
	\[z_- = \{ f'_{z_+} \circ P_+ \}^{\dagger}(g(z)_-)
\]
and so 
	\[g(z)_- = \Bigl[\{ f'_{z_+} \circ P_+ \}^{\dagger} \Bigr]^{-1} (z_-). 
\]
The inverse here is justified as follows. Note that $g^{-1}$ is also an automorphism of $(V, \theta^A)$; accordingly, $g^{-1}$ also maps $V_+$ to itself and $f: V_+ \ra V_+$ is a diffeomorphism. Now consider the map 
	\[F_z : = f'_{z_+} \circ P_+ : V \ra V. 
\]
On the one hand, ${\rm Ran} (F_z) = V_+$ because $f$ is a diffeomorphism, so if $^{\perp}$ signifies $\Omega$-orthogonality then 
	\[{\rm Ker} (F_z^{\dagger}) = ({\rm Ran} F_z)^{\perp} = V_+^{\perp} = V_+
\]
because $V_+$ is Lagrangian. On the other hand, the rank-plus-nullity theorem now converts the evident inclusion $V_- \subseteq {\rm Ker} F_z$ to an equality, whence 
	\[{\rm Ran} (F_z^{\dagger}) = ({\rm Ker} F_z)^{\perp} = V_-^{\perp} = V_-
\]
because $V_-$ is Lagrangian. We conclude that $\{f'_{z_+} \circ P_+ \}^{\dagger}$ restricts to define an isomorphism from $V_-$ to itself. 

\medbreak 

The foregoing argument associates to each $g \in {\rm Aut} (V, \theta^A)$ in the automorphism group of $(V, \theta^A)$ an $f = g|V_+$ in the group ${\rm Diff} (V_+)$ comprising all diffeomorphisms of $V_+$; our formulae for $g(z)_+$ and $g(z)_-$ show that $g$ can be recovered from $f$. In the opposite direction, it is straightforward to check that if an $f \in {\rm Diff} (V_+)$ is given then the same formulae define a $g \in {\rm Aut} (V, \theta^A)$. In summary, we arrive at the following result. 

\begin{theorem} \label{+I} 
If $A \in {\rm sp} (V, \Omega)$ satisfies $A^2 = +I$ then an isomorphism 
	\[{\rm Diff} (V_+) \ra {\rm Aut} (V, \theta^A) : f \mapsto g 
\]
is defined by the rule 
	\[g(z) = f(z_+) + \Bigl[\{ f'_{z_+} \circ P_+ \}^{\dagger} \Bigr]^{-1} (z_-). 
\]
\end{theorem}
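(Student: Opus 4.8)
The plan is to verify that the map $f \mapsto g$ is a well-defined group homomorphism with a two-sided inverse, using the analysis already carried out in the text. The preceding discussion has shown that every $g \in {\rm Aut}(V, \theta^A)$ gives rise to a diffeomorphism $f = g|V_+$ of $V_+$, and that $g$ is recovered from $f$ by the stated formula; so the construction $g \mapsto f$ is a well-defined inverse candidate, and what remains is to check that, conversely, starting from an \emph{arbitrary} $f \in {\rm Diff}(V_+)$, the formula
\[
g(z) = f(z_+) + \Bigl[\{ f'_{z_+} \circ P_+ \}^{\dagger} \Bigr]^{-1} (z_-)
\]
does define a smooth map that lies in ${\rm Aut}(V, \theta^A)$, that this $g$ is a diffeomorphism, and that the correspondence is multiplicative.

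First I would check that $g$ is well defined and smooth: for each $z_+ \in V_+$, the derivative $f'_{z_+}: V_+ \to V_+$ is a linear isomorphism (since $f$ is a diffeomorphism), so $F_z := f'_{z_+} \circ P_+$ has range $V_+$ and kernel $V_-$; the rank-plus-nullity argument already given shows $F_z^{\dagger}$ restricts to an isomorphism $V_- \to V_-$, so its inverse on $V_-$ makes sense, and smoothness in $z$ follows from smoothness of $f'$ together with the smoothness of matrix inversion. Next I would verify the Liouville condition $g^*\theta^A = \theta^A$, i.e.\ $\Omega(g(z) - Ag(z), g'_z v) = \Omega(z - Az, v)$, equivalently $2\Omega(g(z)_-, g'_z v) = 2\Omega(z_-, v)$. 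Here $g'_z v = f'_{z_+}(v_+) + (\text{a vector in } V_-)$, and since $g(z)_- \in V_-$ with $V_-$ Lagrangian, only the $V_+$-component $f'_{z_+}(v_+) = F_z v$ contributes: the left side is $2\Omega(g(z)_-, F_z v) = 2\Omega(F_z^{\dagger}(g(z)_-), v)$. By the definition of $g(z)_-$ as $\bigl[(F_z^{\dagger})|_{V_-}\bigr]^{-1}(z_-)$, we get $F_z^{\dagger}(g(z)_-) = z_-$, and the identity follows.

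Then I would show $g$ is bijective by exhibiting its inverse: since $f$ is a diffeomorphism of $V_+$, one checks $g^{-1}$ is given by the analogous formula built from $f^{-1}$; concretely $g^{-1}(w) = f^{-1}(w_+) + \bigl[\{ (f^{-1})'_{w_+} \circ P_+\}^{\dagger}\bigr]^{-1}(w_-)$, with the $V_-$-part consistency coming from the chain rule $(f^{-1})'_{f(z_+)} = (f'_{z_+})^{-1}$ and the identities $(ST)^{\dagger} = T^{\dagger}S^{\dagger}$, $(S^{-1})^{\dagger} = (S^{\dagger})^{-1}$ for the restrictions to the relevant Lagrangians. The same bookkeeping gives multiplicativity: if $f_1, f_2 \in {\rm Diff}(V_+)$ with images $g_1, g_2$, then the $V_+$-component of $g_1 \circ g_2$ at $z$ is $f_1(f_2(z_+))$, matching the image of $f_1 \circ f_2$, and the $V_-$-component matches because $(g_1 \circ g_2)'_z$ restricted to $V_+$ is $f_1'_{f_2(z_+)} \circ f_2'_{z_+}$, whose $\Omega$-adjoint inverse on $V_-$ composes correctly. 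Injectivity of $f \mapsto g$ is immediate since $f = g|V_+$ can be read off from $g$.

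The main obstacle I expect is the bookkeeping around the symplectic adjoint: one must be careful that $F_z^{\dagger}$ is an endomorphism of all of $V$ (with kernel $V_+$ and range $V_-$), so that $\bigl[(F_z^{\dagger})|_{V_-}\bigr]^{-1}$ really means the inverse of the \emph{restriction-corestriction} $V_- \to V_-$, and that when composing two such maps the Lagrangian-orthogonality identities $V_\pm^{\perp} = V_\pm$ are invoked in exactly the right places. Verifying the cocycle-type identity needed for multiplicativity of the $V_-$-components — that the adjoint-inverse of a composite restricts on $V_-$ to the composite of the adjoint-inverses — is the one genuinely non-automatic computation, though it reduces to $(ST)^{\dagger} = T^{\dagger} S^{\dagger}$ once one confirms all the restrictions involved are genuine isomorphisms of $V_-$, which the rank-plus-nullity argument guarantees.
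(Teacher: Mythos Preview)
Your proposal is correct and follows essentially the same approach as the paper: the text preceding the theorem already extracts $f = g|V_+$ from an arbitrary $g \in {\rm Aut}(V,\theta^A)$ and derives the stated formula, and then simply declares the converse ``straightforward to check''; you are filling in exactly that check (well-definedness, $g^*\theta^A=\theta^A$, invertibility, multiplicativity) in the natural way. Your bookkeeping with $F_z^{\dagger}$ and the Lagrangian identities $V_{\pm}^{\perp}=V_{\pm}$ is the same as the paper's, only carried through more explicitly.
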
 

In this case, the inclusion ${\rm Sp} (V, \Omega)^A  \subset {\rm Aut} (V, \theta^A)$ is decidedly proper: the Liouville structure $(V, \theta^A)$ has an abundance of nonlinear automorphisms.  
	
\medbreak 

\section{The case $A^2 = 0$} 

Let $A \in {\rm sp} (V, \Omega)$ satisfy $A^2 = 0$. The one-parameter subgroup $\{ e^{A t} : t \in \R \}$ of ${\rm Sp} (V, \Omega)$ is then given by 
	\[e^{A t} = I - A t 
\]
and the time $2 t$ Liouville flow by 
	\[\phi^A_{2 t} = e^t (I - A t). 
\]

\medbreak 

Again, let $g \in {\rm Aut} (V, \theta^A)$ be an automorphism of $(V, \theta^A)$; yet again, each $g'_z$ lies in ${\rm Sp} (V, \Omega)$. As $g$ respects the time $2 t$ Liouville flow, 
	\[g(e^t \{ z - A t z\}) = e^t \{ g(z) - A t g(z) \}
\]
whence passage to the limit as $t \ra - \infty$ yields 
	\[g(0) = 0. 
\]
Apply $e^{-t} \frac{\rm d}{{\rm d} t}$ and write $z_t = e^t \{ z - A t z\}$ to obtain 
	\[g'_{z_t} (z - (t + 1) A z) = g(z) - (t + 1) A g(z)
\]
or 
	\[g(z) - g'_{z_t} (z) = (t + 1) \{ A g(z) - g'_{z_t} (A z)  \} .
\]
A further passage to the limit as $t \ra - \infty$ yields $g'_{z_t} \ra g'_0$ so that $g'_{z_t} (z)  \ra g'_0 (z)$
and therefore
	\[A g(z) - g'_0 (A z) = \lim_{t \ra - \infty} (A g(z) - g'_{z_t} (A z)) = \lim_{t \ra - \infty} \frac{1}{t + 1} (g(z) - g'_{z_t} (z)) = 0 
\]
whence 
	\[g(z) - g'_{z_t} (z) = (t + 1) \{ g'_0 (A z) - g'_{z_t} (A z) \}. 
\]
Observe that the (smooth) derivative $g'$ is locally Lipschitz and let $t \ra - \infty$ yet once more: as $z_t$ converges to zero exponentially fast, so also does $g'_0 (A z) - g'_{z_t} (A z)$; we conclude from the last equation displayed above that 
	\[g(z) = g'_0 (z) 
\]
while the previous equation gave
	\[A g(z) = g'_0 (A z). 
\]
We deduce that $g = g'_0 \in {\rm} Sp (V, \Omega)$ is linear and that $A g = g A$. 

\medbreak 

Thus the inclusion reverse to that in Theorem \ref{Z} is established, to the following effect. 

\begin{theorem} \label{0}
If $A \in {\rm sp} (V, \Omega)$ satisfies $A^2 = 0$ then 
	\[{\rm Aut} (V, \theta^A) = {\rm Sp} (V, \Omega)^A. 
\]
\end{theorem}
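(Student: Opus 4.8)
The plan is to establish the inclusion ${\rm Aut}(V,\theta^A) \subseteq {\rm Sp}(V,\Omega)^A$, since Theorem~\ref{Z} already supplies the reverse inclusion. So I take an arbitrary $g \in {\rm Aut}(V,\theta^A)$; automatically $g \in {\rm Sp}(V,\omega)$, so every derivative $g'_z$ lies in ${\rm Sp}(V,\Omega)$, and $g$ preserves the Liouville flow $\phi^A$. The strategy mirrors the proof of Theorem~\ref{can} and Theorem~\ref{-I}: exploit the explicit form of $\phi^A_{2t} = e^t(I - At)$ together with the $t \to -\infty$ limit to force $g$ to be linear, then read off the commutation $gA = Ag$.

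First I would write out the flow-preservation identity $g(e^t\{z - tAz\}) = e^t\{g(z) - tAg(z)\}$ and let $t \to -\infty$ to get $g(0) = 0$, exactly as before. The new wrinkle compared with the $A^2 = -I$ case is that the factor $I - tA$ grows \emph{linearly} in $t$ rather than staying bounded (as the rotation $e^{-it}$ did), so a single limit will not immediately pin down $g$. To handle this I would differentiate: apply $e^{-t}\frac{\rd}{\rd t}$ to the flow identity, writing $z_t = e^t\{z - tAz\}$, which yields $g'_{z_t}(z - (t+1)Az) = g(z) - (t+1)Ag(z)$, i.e. $g(z) - g'_{z_t}(z) = (t+1)\{Ag(z) - g'_{z_t}(Az)\}$. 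Since $z_t \to 0$ exponentially and $g'$ is continuous, $g'_{z_t} \to g'_0$, and dividing by $(t+1)$ forces the bracket's limit to vanish: $Ag(z) = g'_0(Az)$. Substituting this back gives $g(z) - g'_{z_t}(z) = (t+1)\{g'_0(Az) - g'_{z_t}(Az)\}$.

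The main obstacle is the final limit: the right-hand side is now a product of $(t+1)$, which blows up, against $g'_0(Az) - g'_{z_t}(Az)$, which tends to $0$ — an indeterminate form. Here I would use that $g'$ (being smooth) is locally Lipschitz near the origin, so $\|g'_0 - g'_{z_t}\| = O(\|z_t\|) = O(e^t \cdot t)$, which decays faster than $(t+1)$ grows; hence the right-hand side tends to $0$, forcing $g(z) = g'_0(z)$ for all $z$. Thus $g = g'_0 \in {\rm Sp}(V,\Omega)$ is linear, and the relation $Ag(z) = g'_0(Az) = gAz$ derived above says precisely $gA = Ag$, so $g \in {\rm Sp}(V,\Omega)^A$. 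Combined with Theorem~\ref{Z} this gives equality, and of course this recovers Theorem~\ref{can} as the special case $A = 0$.

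A remark worth making: unlike Theorem~\ref{+I}, the quantitative decay of $z_t$ is essential — it is the competition between exponential decay of the flow and polynomial growth of $I - tA$ that makes the argument work, which is why the $A^2 = 0$ case aligns with the $A^2 = -I$ case (rigidity) rather than the $A^2 = +I$ case (flexibility).
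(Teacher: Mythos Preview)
Your proposal is correct and follows essentially the same route as the paper's own argument: flow preservation gives $g(0)=0$, differentiating the flow identity produces $g(z)-g'_{z_t}(z)=(t+1)\{Ag(z)-g'_{z_t}(Az)\}$, a first $t\to-\infty$ limit yields $Ag(z)=g'_0(Az)$, and the local Lipschitz estimate on $g'$ beats the linear factor $(t+1)$ to force $g=g'_0$. Your exposition is slightly more explicit about the rate $\|g'_0-g'_{z_t}\|=O(\|z_t\|)=O(te^t)$, but the ideas are identical.
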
 
\begin{flushright}
$\Box$
\end{flushright} 
\medbreak

\section{Remarks}

The case $A^2 = +I$ is a familiar one in disguise. This choice of $A$ induces a symplectomorphism from $(V, \omega)$ to the cotangent bundle $T^* (V_+)$ of $V_+$ with its canonical symplectic form. In fact, for $z \in V$ define $\alpha(z) \in T^* _{z_+} (V_+)$ by requiring that if $v_+ \in V_+$ then 
	\[\alpha(z) [ (v_+)_{z_+} ] = \Omega (z_-, v_+ ). 
\]
The map $\alpha : V \ra T^* (V_+)$ is a vector bundle isomorphism from $V$ (with $P_+ : V \ra V_+$ as projection) to $T^* (V_+)$ (with its canonical projection) that not only pulls back the canonical symplectic form on $T^* (V_+)$ to $\omega$ but also pulls back the canonical (or `tautological') Liouville form on $T^* (V_+)$ to $\theta^A = \theta + \rd  \psi^A$ which is here given by 
	\[\theta^A_z (v_z) = \frac{1}{2} \Omega ((I - A) z, v) = \Omega (z_-, v_+). 
\]
In this way, we realize Theorem \ref{+I} as reflecting the fact that a diffeomorphism of a cotangent bundle $T^* Q$ preserves its canonical Liouville form precisely when it lifts a diffeomorphism of $Q$ (see page 186 of [1]); this has roots in the theory of Mathieu transformations (as discussed by Whittaker in his classic {\it A treatise on the analytical dynamics of particles and rigid bodies}). 

\medbreak 

The case $A^2 = 0$ covers the canonical Liouville structure, so Theorem \ref{0} is a substantial generalization of Theorem \ref{can}. The special subcase in which $A$ is the rank-one operator defined by fixing $a \in V$ and setting   
	\[A z = - \Omega (a, z) a
\]
so that 
	\[\psi^A (z) = \frac{1}{4} \Omega (a, z)^2
\] 
was considered in [2]. Indeed, there we fixed $a \in V$ and considered the homogeneous polynomials 
	\[\psi^a_n (z) = \frac{1}{2 n} \Omega (a, z)^n
\]
for all positive integers $n$. We found that the behaviour of the homogeneous quadratics differed from the behaviour of all the other homogeneous polynomials, which finding prompted the further consideration of homogeneous quadratics in the present paper.

\medbreak 

\begin{center} 
{\small R}{\footnotesize EFERENCES}
\end{center} 
\medbreak 

[1] R. Abraham and J.E. Marsden, {\it Foundations of Mechanics}, Second Edition, Benjamin Cummings (1978). 

\medbreak 

[2] P.L. Robinson, {\it Automorphisms of Liouville Structures}, University of Florida Preprint (2014). 

\medbreak

\end{document}